\newtheorem{theorem}{Theorem}[section]
\newtheorem*{theorem*}{Theorem}
\newtheorem{lemma}{Lemma}[section]
\theoremstyle{remark}
\newcommand{\CC}{\mathds{C}}
\newcommand{\RR}{\mathds{R}}
\newcommand{\NN}{\mathds{N}}
\newcommand{\HH}{\mathds{H}}
\newcommand{\ind}{\mathds{1}}
\newcommand{\EE}{\mathbb{E}}
\newcommand{\PP}{\mathbb{P}}
\newcommand{\aoe}{\bigg{(}}
\newcommand{\aod}{\bigg{)}}
\DeclareMathOperator{\var}{Var}
\begin{document}
\title{Real zeros of Random Dirichlet series}
\author{Marco Aymone}
\maketitle
\begin{abstract}
Let $F(\sigma)$ be the random Dirichlet series $F(\sigma)=\sum_{p\in\mathcal{P}} \frac{X_p}{p^\sigma}$, where $\mathcal{P}$ is an increasing sequence of
positive real numbers and $(X_p)_{p\in\mathcal{P}}$ is a sequence of i.i.d. random variables
with $\PP(X_1=1)=\PP(X_1=-1)=1/2$. We prove that, for certain conditions on $\mathcal{P}$, if $\sum_{p\in\mathcal{P}}\frac{1}{p}<\infty$ then with positive probability
$F(\sigma)$ has no real zeros while if $\sum_{p\in\mathcal{P}}\frac{1}{p}=\infty$, almost surely $F(\sigma)$ has an infinite number of real zeros.
\end{abstract}

\section{Introduction.}
A Dirichlet series is an infinite sum of the form $F(\sigma):=\sum_{p\in\mathcal{P}}\frac{X_p}{p^{\sigma}}$, where $\mathcal{P}$ is an
increasing sequence of positive real numbers and $(X_p)_{p\in\mathcal{P}}$ is any sequence of complex numbers. If $F(\sigma)$ converges then $F(s)$ converges for all
$s\in\CC$ with real part greater than $\sigma$ (see \cite{montgomerylivro} Theorem 1.1). The abscissa of convergence of a Dirichlet series is the smallest number $\sigma_c$
for which $F(\sigma)$ converges for all $\sigma>\sigma_c$.

The problem of finding the zeros of a Dirichlet series is classical in Analytic Number Theory. For instance, the Riemann hypothesis states that the zeros of the analytic continuation
of the Riemann zeta function $\zeta(\sigma):=\sum_{k=1}^\infty\frac{1}{k^\sigma}$ in the half plane $\{\sigma+it\in\CC:\sigma>0\}$ all have real part equal to $1/2$.
This analytic continuation can be described in terms of a convergent Dirichlet series -- The Dirichlet $\eta$-function
$\eta(s)=\sum_{k=1}^\infty\frac{(-1)^{k+1}}{k^s}$ satisfies $\eta(s)=(1-2^{1-s})\zeta(s)$, for all complex $s$ with positive real part. Thus, to find zeros of $\eta(s)$ for $0<Re(s)<1$ is the same as finding non-trivial zeros of $\zeta$.

In this paper we are interested in the real zeros of the random Dirichlet series $F(\sigma):=\sum_{p\in\mathcal{P}} \frac{X_p}{p^{\sigma}}$, where
the coefficients $(X_p)_{p\in\mathcal{P}}$ are random and $\mathcal{P}$ satisfies:
\begin{align*}
&(P1)\quad \mathcal{P}\cap[0,1)=\varnothing,\\
&(P2) \quad \sum_{p\in\mathcal{P}}\frac{1}{p^\sigma} \mbox{ has abcissa of convergence }\sigma_c=1.
\end{align*}

For instance, $\mathcal{P}$ can be the set of the natural numbers. The conditions $(P1-P2)$ imply, in particular, that the series $\sum_{p\in\mathcal{P}}\frac{1}{p^{2\sigma}}$ converges for each $\sigma>1/2$.
Therefore, if $(X_p)_{p\in\mathcal{P}}$ is a sequence of i.i.d. random variables with $\EE X_p =0 $ and $\EE X_p^2=1$, then, by the Kolmogorov one-series Theorem,
the series $F(\sigma)=\sum_{p\in\mathcal{P}} \frac{X_p}{p^\sigma}$ has \textit{a.s.} abscissa of convergence $\sigma_c=1/2$. Moreover, the function of one complex variable
$\sigma+it\mapsto F(\sigma+it)$ is \textit{a.s.} an analytic function in the half plane $\{\sigma+it\in\CC:\sigma>1/2\}$. In the case $X_p=\pm 1$ with equal probability,
the line $\sigma=\sigma_c$ is a natural boundary for $F(\sigma+it)$, see \cite{kahane} (pg. 44 Theorem 4).

Our main result states:

\begin{theorem}\label{Teorema 1} Assume that $\mathcal{P}$ satisfies $P1$-$P2$ and let $(X_p)_{p\in\mathcal{P}}$ be i.i.d and such that $\PP(X_p=1)=\PP(X_p=-1)=1/2$. Let
$F(\sigma)=\sum_{p\in\mathcal{P}} \frac{X_p}{p^\sigma}$.\\
i. If $\sum_{p\in\mathcal{P}} \frac{1}{p}<\infty$, then with positive probability $F$ has no real zeros;\\
ii. If $\sum_{p\in\mathcal{P}} \frac{1}{p}=\infty$, then \textit{a.s.} $F$ has an infinite number of real zeros.
\end{theorem}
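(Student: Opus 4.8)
The two parts are governed by complementary behaviour of $F$ near the critical line $\sigma=1/2$.

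\emph{Part (i).} Since $\sum_p 1/p<\infty$, Kolmogorov's one–series theorem gives that $\sum_pX_pp^{-1/2}$ converges a.s., and then summation by parts shows the partial sums of $F$ converge \emph{uniformly} on the whole ray $[1/2,\infty)$; concretely, I would first record that $M_N:=\sup_{\sigma\ge1/2}\big|\sum_{p>N}X_pp^{-\sigma}\big|\to0$ a.s.\ (on the a.s.\ event that $\sum_pX_pp^{-1/2}$ converges, the tails $\sum_{q>M}X_qq^{-1/2}$ are uniformly small, and $p^{1/2-\sigma}$ is positive, non-increasing in $p$ and $\le1$ for $\sigma\ge1/2$, so Abel's inequality gives control uniform in $\sigma$). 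Next I would split $[1/2,\infty)$ into $[1/2,\sigma_0]$ and $[\sigma_0,\infty)$. On the tail I let the leading term $X_{p_1}p_1^{-\sigma}$ dominate (here $p_1=\min\mathcal P\ge1$ by $(P1)$): since $\sum_{p>p_1}(p_1/p)^\sigma$ is non-increasing in $\sigma$, is finite for $\sigma\ge1$ (it is $\le p_1\sum_p1/p$), and $\to0$ as $\sigma\to\infty$ by dominated convergence, I can fix a deterministic $\sigma_0\ge1$ with $\sum_{p>p_1}(p_1/p)^{\sigma_0}<\tfrac12$, so that \emph{for every realisation} with $X_{p_1}=1$ and every $\sigma\ge\sigma_0$ one has $F(\sigma)=p_1^{-\sigma}\big(1+\sum_{p>p_1}X_p(p_1/p)^\sigma\big)\ge\tfrac12p_1^{-\sigma}>0$. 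On the compact part I use $p_1^{-\sigma}\ge\delta:=p_1^{-\sigma_0}>0$, choose $N$ with $\PP(M_N<\delta/4)>0$, and intersect with $\{X_p=1\text{ for all }p\le N\}$; on this intersection (two independent events of positive probability) $F(\sigma)=\sum_{p\le N}p^{-\sigma}+\sum_{p>N}X_pp^{-\sigma}\ge p_1^{-\sigma}-M_N>\tfrac34\delta>0$ for $\sigma\in[1/2,\sigma_0]$, while $F>0$ on $[\sigma_0,\infty)$ by the previous step. Hence with positive probability $F$ is strictly positive on $[1/2,\infty)$, so it has no real zero. (If $1\in\mathcal P$ the argument is the same with the constant term $X_1$ playing the role of the leading term.)

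\emph{Part (ii).} Here $\sum_p1/p=\infty$, so with $\sigma_k:=1/2+1/k\downarrow1/2$ and $v_k:=\var F(\sigma_k)=\sum_pp^{-2\sigma_k}$ one has $v_k<\infty$ but $v_k\uparrow\infty$ by monotone convergence. I would run a Lindeberg CLT for the triangular array $\big(X_pp^{-\sigma_k}/\sqrt{v_k}\big)_p$ — the Lindeberg condition is automatic because $|X_pp^{-\sigma_k}|/\sqrt{v_k}\le p_1^{-1/2}/\sqrt{v_k}\to0$ uniformly in $p$ — to get $F(\sigma_k)/\sqrt{v_k}\Rightarrow N(0,1)$. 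Then for each $A\in\RR$, $\liminf_k\PP(F(\sigma_k)\ge A)\ge\lim_k\PP\big(F(\sigma_k)\ge\sqrt{v_k}\big)=1-\Phi(1)>0$, so by the reverse Fatou lemma $\PP(\limsup_kF(\sigma_k)\ge A)\ge1-\Phi(1)$, and letting $A\to\infty$, $\PP(\limsup_kF(\sigma_k)=+\infty)>0$. Crucially, $\{\limsup_kF(\sigma_k)=+\infty\}$ is a tail event of $(X_p)_{p\in\mathcal P}$ — altering finitely many $X_p$ perturbs each $F(\sigma_k)$ by at most a constant independent of $k$ — so Kolmogorov's $0$–$1$ law upgrades its probability to $1$; running the same argument with $(-X_p)_{p\in\mathcal P}$ (which has the same law) gives $\liminf_kF(\sigma_k)=-\infty$ a.s.\ as well. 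Thus a.s.\ both $\{k:F(\sigma_k)>1\}$ and $\{k:F(\sigma_k)<-1\}$ are infinite; interlacing such indices and using continuity of $F$ on $(1/2,\infty)$, the intermediate value theorem produces a zero of $F$ between each consecutive pair, and since $\sigma_k\downarrow1/2$ these are infinitely many distinct zeros accumulating at $1/2$.

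\emph{Main obstacle.} The delicate point is entirely in (i): the naive bound $\big|\sum_{p>N}X_pp^{-\sigma}\big|\le\sum_{p>N}p^{-\sigma}$ is worthless as $\sigma\to1/2^+$, because $\sum_p1/p<\infty$ yields neither $\sum_pp^{-1/2}<\infty$ nor $\sum_p\frac{\log p}{p}<\infty$, so one has to extract genuine cancellation, through the uniform convergence of the Dirichlet series all the way down to $\sigma=1/2$; and the separate leading-term argument is needed precisely because that uniform smallness of the tail cannot persist as $\sigma\to\infty$. In (ii) the only non-routine step is recognising that $\{\limsup_{\sigma\to1/2^+}F=+\infty\}$ is a tail event, which converts the CLT's ``positive probability'' into ``probability one'' (a Paley–Zygmund estimate would work equally well in place of the CLT).
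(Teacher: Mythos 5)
Your proposal is correct in both parts, and it is worth comparing the routes. For part (ii) your argument is essentially the paper's general-case proof: the paper establishes exactly your normal limit (its Lemma \ref{lemma central do limite}, adapted from Bovier--Picco) by computing the characteristic function of $F(\sigma)/V(\sigma)$ as an infinite product of cosines as $\sigma\to 1/2^+$, and then runs the same scheme you describe -- positive probability from the CLT, the observation that the $\limsup$/$\liminf$ statements are tail events, Kolmogorov's $0$--$1$ law, and finally continuity to produce infinitely many sign changes. Your only deviation is invoking the Lindeberg--Feller theorem along $\sigma_k=1/2+1/k$; since each row of your array has infinitely many summands, you should either truncate each row (choose $y_k$ with $\sum_{p>y_k}p^{-2\sigma_k}=o(v_k)$ and dispose of the remainder by Chebyshev) or compute the characteristic function of the full series directly, as the paper does -- a routine repair, not a gap. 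For part (i) the skeleton is the same (force $X_p=1$ on an initial segment, an event of probability $2^{-\pi(N)}>0$, and show the tail cannot destroy positivity uniformly on $[1/2,\infty)$), but the implementations genuinely differ: the paper makes the tail bound quantitative via its Mellin-transform lemma, which supplies the decaying factor $U^{-\epsilon}$, controls the bad event by L\'evy's maximal inequality plus Hoeffding, and uses the consequence of $(P2)$ that $\limsup_U \pi(U)/U^{3/4}=\infty$ so that the head $\pi(U)U^{-1/2-\epsilon}$ beats the tail for every $\epsilon>0$ at once; you instead use only the a.s.\ convergence of $\sum_p X_p p^{-1/2}$ plus Abel summation to get $M_N\to 0$ a.s.\ (so $\PP(M_N<\delta/4)>0$ requires no exponential inequality and no counting estimate from $(P2)$), at the price of splitting the ray at a deterministic $\sigma_0$ and handling $[\sigma_0,\infty)$ by letting the leading term dominate. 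Your route is softer and shorter; the paper's is quantitative (explicit exponential bound on the exceptional event) and covers all $\sigma\ge 1/2$ in a single inequality. Both yield the theorem, and both adapt to give the paper's remark that in the divergent case $F$ has, with positive probability, no zeros on $[1/2+\delta,\infty)$.
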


It follows as corollary to the proof of item i. that in the case $\sum_{p\in\mathcal{P}}\frac{1}{p}=\infty$, with positive probability $F(\sigma)$ has no zeros in the interval
$[1/2+\delta,\infty)$, for fixed $\delta>0$.

Since a Dirichlet series $F(s)=\sum_{p\in\mathcal{P}}\frac{X_p}{p^s}$ is a random analytic function, it can be viewed as a random Taylor series
$\sum_{k=0}^\infty Y_k (s-a)^k$, where $a>\sigma_c$ and $(Y_k)_{k\in\NN}$ are random and \textit{dependent} random variables. The case of random Taylor series
and random polynomials where $(Y_k)_{k\in\NN}$ are i.i.d. has been widely studied in the literature, for an historical background we refer to \cite{krishnapurzeros} and
\cite{Vurandompolynomials} and the references therein.

\section{Preliminaries}
\subsection{Notation.} We employ both $f(x)=O(g(x))$ and Vinogradov's $f(x)\ll g(x)$ to mean that there exists a constant $c>0$
such that $|f(x)|\leq c |g(x)|$ for all sufficiently large $x$, or when $x$ is sufficiently close to a certain real number $y$.
For $\sigma\in\RR$, $\HH_{\sigma}$ denotes the half plane $\{z\in\CC:Re(z)>\sigma\}$. The indicator function of a set $S$ is denoted by $\ind_S(s)$
and it is equal to $1$ if $s\in S$, or equal to $0$ otherwise. We let $\pi(x)$ to denote the counting function of $\mathcal{P}$:
\begin{equation*}
\pi(x):=|\{p\leq x:p\in\mathcal{P}\}|.
\end{equation*}

\subsection{The Mellin transform for Dirichlet series} In what follows $\mathcal{P}=\{p_1<p_2<...\}$ is a set of non-negative real numbers satisfying $P1$-$P2$ above.
A generic element of $\mathcal{P}$ is de noted by $p$, and we employ $\sum_{p\leq x}$ to denote $\sum_{p\in\mathcal{P}; p\leq x}$.
Let $A(x)=\sum_{p \leq x} X_p$ and $F(s)=\sum_{p\in\mathcal{P}}\frac{X_p}{p^s}$. Let $\sigma_c>0$ be the abscissa of convergence of $F(\sigma)$. Then $F$ can be
represented as the Mellin transform of the function $A(x)$ (see, for instance, Theorem 1.3 of \cite{montgomerylivro}):
\begin{equation}\label{equation integral representation for Dirichlet series}
F(s)=s\int_{1}^\infty A(x)\frac{dx}{x^{1+s}},\mbox{ for all } s\in\HH_{\sigma_c}.
\end{equation}
In particular, we can state:
\begin{lemma}\label{lemma mellin transform application} Let $F(s)=\sum_{p\in\mathcal{P}} \frac{X_p}{p^s}$ be such that $F(1/2)$ is convergent.
Then for each $\sigma\geq1/2$ and all $\epsilon>0$, for all $U>1$:
\begin{equation*}
F(\sigma+\epsilon)=\sum_{p\leq U} \frac{X_p}{p^{\sigma+\epsilon}} + O\bigg{(}U^{-\epsilon}\sup_{x>U}\bigg{|}\sum_{U< p\leq x}\frac{X_p}{p^\sigma} \bigg{|} \bigg{)},
\end{equation*}
where the implied constant in the $O(\cdot)$ term above can be taken to be $1$.
\end{lemma}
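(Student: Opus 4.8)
The plan is to split $F(\sigma+\epsilon)$ as the finite sum over $p\le U$ plus the tail $\sum_{p>U}\frac{X_p}{p^{\sigma+\epsilon}}$, and to bound the tail by partial (Abel) summation, factoring the extra decay $p^{-\epsilon}$ off the partial sums at ``level'' $\sigma$.

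First I would record the relevant convergence facts. Since $F(1/2)$ converges, Theorem 1.1 of \cite{montgomerylivro} shows $F(s)$ converges throughout $\HH_{1/2}$, and by hypothesis also at $s=1/2$; hence $F(\sigma)$ and $F(\sigma+\epsilon)$ converge for every $\sigma\ge 1/2$ and $\epsilon>0$. In particular the truncated sums $S(x):=\sum_{U<p\le x}\frac{X_p}{p^\sigma}$ have a finite limit as $x\to\infty$, so that
\[
M:=\sup_{x>U}\left|\sum_{U<p\le x}\frac{X_p}{p^\sigma}\right|<\infty ,
\]
which is exactly the quantity appearing inside the $O(\cdot)$ in the statement. (Equivalently, one may view $\sum_{p>U}\frac{X_p\,p^{-\sigma}}{p^{s}}$ as a Dirichlet series whose summatory function is $S(x)$, which vanishes on $[1,U]$ and has abscissa of convergence $\le 0$.)

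Next, for fixed $x>U$ I would apply Abel summation with coefficients $a_p=X_p/p^\sigma$, partial-sum function $S(t)$ (vanishing at $t=U$), and smooth weight $\phi(t)=t^{-\epsilon}$:
\[
\sum_{U<p\le x}\frac{X_p}{p^{\sigma+\epsilon}}
=\frac{S(x)}{x^{\epsilon}}+\epsilon\int_{U}^{x}\frac{S(t)}{t^{1+\epsilon}}\,dt ,
\]
which is the discrete counterpart of the integral representation \eqref{equation integral representation for Dirichlet series}. Bounding $|S(t)|\le M$ and using the exact evaluation $\epsilon\int_U^x t^{-1-\epsilon}\,dt=U^{-\epsilon}-x^{-\epsilon}$ gives
\[
\left|\sum_{U<p\le x}\frac{X_p}{p^{\sigma+\epsilon}}\right|
\le \frac{M}{x^{\epsilon}}+M\left(U^{-\epsilon}-x^{-\epsilon}\right)=M\,U^{-\epsilon},
\]
a bound that is uniform in $x>U$. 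Letting $x\to\infty$ yields $\bigl|\sum_{p>U}\frac{X_p}{p^{\sigma+\epsilon}}\bigr|\le M\,U^{-\epsilon}$, and since $F(\sigma+\epsilon)=\sum_{p\le U}\frac{X_p}{p^{\sigma+\epsilon}}+\sum_{p>U}\frac{X_p}{p^{\sigma+\epsilon}}$, this is precisely the asserted identity with implied constant $1$.

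I do not expect a genuine obstacle here; the two points that deserve care are (i) justifying that the tail series converges (so that the supremum $M$ is finite and the passage to the limit $x\to\infty$ is legitimate), which is where the hypothesis that $F(1/2)$ converges enters, and (ii) keeping the telescoping inside the integral exact, since it is the cancellation of the two $M\,x^{-\epsilon}$ terms that pins the implied constant down to exactly $1$ rather than $2$.
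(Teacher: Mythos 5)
Your proof is correct and follows essentially the same route as the paper: the paper applies the Mellin-transform identity \eqref{equation integral representation for Dirichlet series} to the tail series with coefficients $X_p p^{-\sigma}$ at the point $\epsilon$, bounding $\epsilon\int_U^\infty\bigl(\sum_{U<p\le x}X_p p^{-\sigma}\bigr)x^{-1-\epsilon}\,dx$ by $U^{-\epsilon}\sup_{x>U}\bigl|\sum_{U<p\le x}X_p p^{-\sigma}\bigr|$, which is exactly your Abel-summation estimate after letting $x\to\infty$. Your finite-range version with the boundary term $S(x)x^{-\epsilon}$ and the subsequent limit is just the discrete counterpart of that integral representation, and both arguments pin the implied constant to $1$ in the same way.
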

\begin{proof}
Put $\displaystyle A(x)=\sum_{p\leq x} \ind_{(U,\infty)}(p)\frac{X_p}{p^\sigma}$. By (\ref{equation integral representation for Dirichlet series}) it follows that
\begin{align*}
\sum_{p>U} \frac{X_pp^{-\sigma}}{p^{\epsilon}}&=\epsilon\int_{1}^\infty A(x)\frac{dx}{x^{1+\epsilon}}=\epsilon\int_{U}^\infty\bigg{(} \sum_{U<n\leq x} \frac{X_p}{p^\sigma}
\bigg{)}\frac{dx}{x^{1+\epsilon}}\\
&\ll \sup_{x>U}\bigg{|}\sum_{U< p\leq x}\frac{X_p}{p^\sigma} \bigg{|} \int_{U}^\infty \frac{\epsilon}{x^{1+\epsilon}}dx =
U^{-\epsilon}\sup_{x>U}\bigg{|}\sum_{U< p\leq x}\frac{X_p}{p^\sigma} \bigg{|}.
\end{align*}
\end{proof}
\subsection{A few facts about sums of independent random variables} In what follows we use

\textit{Levy's maximal inequality}: Let $X_1,...,X_n$ be independent random variables. Then
\begin{equation}\label{equation Levy maximal}
\PP\bigg{(}\max_{1\leq m \leq n}\bigg{|}\sum_{k=1}^m X_k \bigg{|}\geq t \bigg{)}\leq 3 \max_{1\leq m \leq n}\PP\bigg{(}\bigg{|}\sum_{k=1}^m X_k \bigg{|}\geq \frac{t}{3} \bigg{)}.
\end{equation}

\textit{Hoeffding's inequality}: Let $X_1,...,X_n$ be i.i.d. with $\PP(X_1=1)=\PP(X_1=-1)=1/2$. Let $a_1,...,a_n$ be real numbers. Then for any $\lambda>0$
\begin{equation}\label{equation Hoeffding inequality}
\PP\bigg{(}\sum_{k=1}^n a_k X_k \geq \lambda  \bigg{)}\leq \exp\bigg{(}-\frac{\lambda^2}{2\sum_{k=1}^n a_k^2}  \bigg{)}.
\end{equation}

\section{Proof of the main result}
\begin{proof}[Proof of item i] Since $\sum_{p\in\mathcal{P}}\frac{1}{p}<\infty$ we have by the Kolmogorov one series theorem that the series
$\sum_{p\in\mathcal{P}}\frac{X_p}{\sqrt{p}}$ converges almost surely. In what follows $U>0$ is a large fixed number to be chosen later, $A_U$ is the
event in which $X_p=1$ for all $p\leq U$ and $B_U$ is the event in which
\begin{equation*}
\sup_{x>U} \bigg{|}\sum_{U<p\leq x} \frac{X_p}{\sqrt{p}} \bigg{|} < \frac{1}{10}.
\end{equation*}
 We claim that for sufficiently large $U$ on the event $A_U\cap B_U$ the function
$F(s)=\sum_{p\in\mathcal{P}}\frac{X_p}{p^s}$ does not vanish for all
$s\geq \frac{1}{2}$. Further for sufficiently large $U$ we will show that $\PP(A_U\cap B_U)>0$.

On the event $A_U\cap B_U$ we have by lemma \ref{lemma mellin transform application}
that
\begin{equation}\label{equacao desigualdade sigma + epsilon}
F(1/2+\epsilon)\geq \sum_{p\leq U}\frac{1}{p^{1/2+\epsilon}}-\frac{1}{10U^\epsilon}\geq \frac{\pi(U)}{U^{1/2+\epsilon}}-\frac{1}{10U^\epsilon},
\end{equation}
where $\pi(U)=\#\{p\leq U: p\in\mathcal{P}\}$. We claim that for each $\delta>0$ we have that
\begin{equation*}
\limsup_{U\to\infty}\frac{\pi(U)}{U^{1-\delta}}=\infty.
\end{equation*}
In fact, this is a consequence from P2: For any $\delta>0$ the series diverges $\sum_{p\in\mathcal{P}}\frac{1}{p^{1-\delta}}=\infty$.
To show that this is true we argue by contraposition: Assume that for some fixed $\delta>0$
$\limsup_{U\to\infty}\frac{\pi(U)}{U^{1-\delta}}<\infty$ and hence that there exists a constant $c>0$ such that for all $U>0$,
$\pi(U)\leq c U^{1-\delta}$. In that case we have for $0<\epsilon<\delta$
\begin{align*}
\sum_{p\leq U}\frac{1}{p^{1-\epsilon}}&=\int_1^U \frac{d\pi(x)}{x^{1-\epsilon}}=\frac{\pi(U)}{U^{1-\epsilon}}-\pi(1) +(1-\epsilon)\int_1^U \frac{\pi(x)}{x^{2-\epsilon}} dx \\
& \leq \frac{cU^{1-\delta}}{U^{1-\epsilon}}+1+(1-\epsilon)\int_1^U \frac{cx^{1-\delta}}{x^{2-\epsilon}}dx\ll 1+\int_1^U \frac{1}{x^{1+(\delta-\epsilon)}}dx\ll 1,
\end{align*}
and hence that the series $\sum_{p\in\mathcal{P}}\frac{1}{p^{1-\epsilon}}$ converges. Therefore, we showed that $\limsup_{U\to\infty}\frac{\pi(U)}{U^{1-\delta}}<\infty$
implies that $\sum_{p\in\mathcal{P}}\frac{1}{p^\sigma}$ has abscissa of convergence $\sigma_c\leq 1-\delta$.

Now we may select arbitrarily large values of $U>1$ for which $\pi(U)\geq  U^{1-1/4}$ and $\sum_{p\leq U}\frac{1}{\sqrt{p}}>\frac{1}{10}$, and hence,
by (\ref{equacao desigualdade sigma + epsilon}),
for all $\epsilon>0$ we obtain that
\begin{equation*}
 F(1/2+\epsilon)\geq \frac{ U^{1-1/4} }{U^{1/2+\epsilon}}-\frac{1}{10U^\epsilon}=\frac{1}{U^\epsilon}\bigg{(}U^{1/4}-\frac{1}{10}\bigg{)}>0.
\end{equation*}
This proves that on the event $A_U\cap B_U$ we have that $F(s)\neq 0$ for all $s\in[1/2,\infty)$.

Observe that $A_U$ and $B_U$ are independent and $A_U$ has probability $\frac{1}{2^{\pi(U)}}>0$. Now we will show that the complementary event $B_U^c$ has small probability.
Indeed, by applying the Levy's maximal inequality and the Hoeffding's inequality, we obtain:
\begin{align*}
\PP(B_U^c)&=\lim_{n\to\infty}\PP\bigg{(} \max_{U<x\leq n} \bigg{|}\sum_{U<p\leq x} \frac{X_p}{\sqrt{p}} \bigg{|} \geq  \frac{1}{10}  \bigg{)}\leq 3\lim_{n\to\infty}
\max_{U<x\leq n}\PP\bigg{(} \bigg{|}\sum_{U<p\leq x} \frac{X_p}{\sqrt{p}} \bigg{|} \geq  \frac{1}{30}  \bigg{)}\\
& \leq 6 \lim_{n\to\infty} \max_{U<x\leq n}
\PP\bigg{(} \sum_{U<p\leq x} \frac{X_p}{\sqrt{p}} \geq  \frac{1}{30}  \bigg{)}
\leq  6 \lim_{n\to\infty} \exp\bigg{(}\frac{-1/30^2}{2\sum_{U<p\leq n}\frac{1}{p}}  \bigg{)}\\
& \leq 6 \exp\bigg{(}-\frac{1}{2\cdot30^2\sum_{p>U}\frac{1}{p}}  \bigg{)}.
\end{align*}
Since $\sum_{p\in\mathcal{P}}\frac{1}{p}$ is convergent, the tail $\sum_{p>U}\frac{1}{p}$
converges to $0$ as $U\to\infty$. Therefore, for sufficiently large $U$ we can make $\PP(B_U^c)<1/2$. \end{proof}
Now we are going to prove Theorem \ref{Teorema 1} part $ii$. We present two different proofs. In the first proof we assume that the counting function of $\mathcal{P}$
\begin{equation}\label{equacao pi(x) ll log x}
\pi(x)\ll \frac{x}{\log x}.
\end{equation}
In this case, for instance, $\mathcal{P}$ can be the set of prime numbers. In this proof we show that, for $\sigma$ close to $1/2$, the infinite sum $\sum_{p\in\mathcal{P}}\frac{X_p}{p^\sigma}$ can be approximated
by the partial sum $\sum_{p\leq y}\frac{X_p}{\sqrt{p}}$ for a suitable choice of $y$ (Lemma \ref{lemma aproximando a serie em sigma pela serie em meio }). Then we show that these partial sums change sign for an infinite number of $y$, and hence, $F(\sigma)=\sum_{p\in\mathcal{P}}\frac{X_p}{p^\sigma}$ changes sign for an infinite number of $\sigma\to 1/2^+$.

The case in which $\mathcal{P}$ is the set of natural numbers, the infinite sum $\sum_{p\in\mathcal{P}}\frac{X_p}{p^\sigma}$ can not be approximated by the finite sum $\sum_{p\leq y}\frac{X_p}{\sqrt{p}}$, \textit{i.e}, Lemma \ref{lemma aproximando a serie em sigma pela serie em meio } fails in this case. Thus, our approach is different in the general case. First we show (Lemma \ref{lemma central do limite}) that $\sum_{p\in\mathcal{P}}\frac{1}{p}=\infty$ implies that
\begin{equation}\label{equacao central do limite}
\frac{\sum_{p\in\mathcal{P}}\frac{X_p}{p^\sigma}}{\sqrt{\sum_{p\in\mathcal{P}}\frac{1}{p^{2\sigma}}}}\to_d \mathcal{N}(0,1), \mbox{ as } \sigma\to\frac{1}{2}^+,
\end{equation}
and second, for each $L>0$, the event
\begin{equation*}
\limsup_{\sigma\to\frac{1}{2}^+}\frac{\sum_{p\in\mathcal{P}}\frac{X_p}{p^\sigma}}{\sqrt{\sum_{p\in\mathcal{P}}\frac{1}{p^{2\sigma}}}}\geq L
\end{equation*}
is a tail event, and by (\ref{equacao central do limite}), it has positive probability. Similarly,
\begin{equation*}
\liminf_{\sigma\to\frac{1}{2}^+}\frac{\sum_{p\in\mathcal{P}}\frac{X_p}{p^\sigma}}{\sqrt{\sum_{p\in\mathcal{P}}\frac{1}{p^{2\sigma}}}}\leq -L
\end{equation*}
also is a tail event and has positive probability. Thus, by the Kolmogorov $0-1$ Law, with probability $1$,
$\sum_{p\in\mathcal{P}}\frac{X_p}{p^\sigma}$ changes sign for an infinite number of $\sigma\to 1/2^+$.
\subsection{Proof of Theorem \ref{Teorema 1} (ii) in the case $\pi(x)\ll \frac{x}{\log x}$}

\begin{lemma}\label{lemma aproximando a serie em sigma pela serie em meio } Assume that $\mathcal{P}$ satisfies $P1$-$P2$ and that $\sum_{p\in\mathcal{P}}\frac{1}{p}=\infty$. Further, assume that $\pi(x)\ll\frac{x}{\log x}$. Let $\sigma>1/2$ and $y=\exp((2\sigma-1)^{-1})\geq 10$. Then there is a constant $d>0$ such that for all $\lambda>0$
\begin{equation*}
\PP\bigg{(}\bigg{|} \sum_{p\in\mathcal{P}} \frac{X_p}{p^\sigma}-\sum_{p\leq y} \frac{X_p}{\sqrt{p}} \bigg{|} \geq 2\lambda  \bigg{)}\leq 4\exp(-d\lambda^2).
\end{equation*}
\end{lemma}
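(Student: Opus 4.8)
The plan is to split the difference into two sums of independent, symmetric random variables,
\[
S_1:=\sum_{p\le y} X_p\Big(\frac{1}{p^\sigma}-\frac{1}{\sqrt p}\Big),\qquad S_2:=\sum_{p>y}\frac{X_p}{p^\sigma},
\]
so that the quantity to be estimated is exactly $S_1+S_2$. By the triangle inequality and a union bound it suffices to prove, for $i=1,2$, a bound of the form $\PP(|S_i|\ge\lambda)\le 2\exp(-d_i\lambda^2)$ with a constant $d_i>0$ depending only on the implied constant in $\pi(x)\ll x/\log x$; the lemma then follows with $d=\min(d_1,d_2)$. For each piece the natural tool is Hoeffding's inequality (\ref{equation Hoeffding inequality}), applied two-sidedly via the symmetry of the $X_p$: this reduces everything to bounding the sum of squares of the coefficients uniformly in $\sigma$, equivalently uniformly in $y=\exp((2\sigma-1)^{-1})\ge 10$.

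For $S_1$: since $\sigma>1/2$ and $p\ge1$, we have $0\le \frac{1}{\sqrt p}-\frac{1}{p^\sigma}=\frac{1}{\sqrt p}\big(1-e^{-(\sigma-1/2)\log p}\big)\le \frac{(\sigma-1/2)\log p}{\sqrt p}$, using $1-e^{-u}\le u$ for $u\ge0$. Hence $\sum_{p\le y}\big(\tfrac{1}{p^\sigma}-\tfrac{1}{\sqrt p}\big)^2\le(\sigma-\tfrac12)^2\sum_{p\le y}\frac{(\log p)^2}{p}$, and partial summation together with $\pi(x)\ll x/\log x$ gives $\sum_{p\le y}\frac{(\log p)^2}{p}\ll(\log y)^2$ (the boundary term is $\ll\frac{(\log y)^2}{y}\pi(y)\ll\log y$, and the remaining integral is $\ll\int^y\frac{\log x}{x}\,dx\ll(\log y)^2$). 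Since $\log y=(2\sigma-1)^{-1}$, the factor $(\sigma-\tfrac12)^2(\log y)^2=\tfrac14$ cancels, the sum of squares is $O(1)$, and Hoeffding gives $\PP(|S_1|\ge\lambda)\le 2\exp(-d_1\lambda^2)$.

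For $S_2$: by $(P2)$ the series $\sum_{p>y}p^{-2\sigma}$ converges for $\sigma>1/2$, so $S_2$ is an a.s.\ convergent sum. For each finite truncation, Hoeffding and symmetry give $\PP\big(\big|\sum_{y<p\le n}X_p p^{-\sigma}\big|\ge\lambda\big)\le 2\exp\big(-\tfrac{\lambda^2}{2\sum_{p>y}p^{-2\sigma}}\big)$; letting $n\to\infty$ and applying Fatou's lemma to the indicator functions of the sets $\{|\,\cdot\,|>\lambda\}$, using that the truncations converge a.s.\ to $S_2$, yields $\PP(|S_2|\ge\lambda)\le 2\exp\big(-\tfrac{\lambda^2}{2\sum_{p>y}p^{-2\sigma}}\big)$. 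Finally, partial summation with $\pi(x)\ll x/\log x$ gives $\sum_{p>y}p^{-2\sigma}\ll\sigma\int_y^\infty\frac{x^{-2\sigma}}{\log x}\,dx\le\frac{\sigma}{\log y}\cdot\frac{y^{1-2\sigma}}{2\sigma-1}=\frac{\sigma}{e}$, again because $\log y=(2\sigma-1)^{-1}$ forces $y^{1-2\sigma}=e^{-1}$ and $\tfrac{1}{\log y}\cdot\tfrac{1}{2\sigma-1}=1$, and $\sigma$ is bounded since $y\ge10$. Hence $\PP(|S_2|\ge\lambda)\le 2\exp(-d_2\lambda^2)$, and combining, $\PP(|S_1+S_2|\ge2\lambda)\le\PP(|S_1|\ge\lambda)+\PP(|S_2|\ge\lambda)\le4\exp(-d\lambda^2)$.

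The substantive point — and the exact place where $\pi(x)\ll x/\log x$ is indispensable — is this pair of variance estimates: the calibration $y=\exp((2\sigma-1)^{-1})$ makes the algebra collapse to an absolute constant, but without the logarithmic saving in $\pi(x)$ both $\sum_{p\le y}\frac{(\log p)^2}{p}$ and $\sum_{p>y}p^{-2\sigma}$ would be larger by a power of $\log y$, so the variances would blow up as $\sigma\to1/2^+$ — which is precisely why a different argument is needed when $\mathcal{P}=\NN$. A minor technical nuisance is the passage from the finite Hoeffding bound to the infinite sum $S_2$, but this is handled cleanly by the a.s.\ convergence supplied by $(P2)$.
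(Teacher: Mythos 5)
Your proposal is correct and follows essentially the same route as the paper: the same split into $\sum_{p\le y}X_p(p^{-\sigma}-p^{-1/2})$ and $\sum_{p>y}X_pp^{-\sigma}$, a union bound, and Hoeffding combined with partial summation using $\pi(x)\ll x/\log x$ so that the calibration $\log y=(2\sigma-1)^{-1}$ makes both variance terms $O(1)$. Your only departures are cosmetic refinements — bounding $p^{-\sigma}-p^{-1/2}$ via $1-e^{-u}\le u$ instead of the mean value theorem, and justifying the Hoeffding bound for the infinite tail sum by a truncation/limit argument that the paper applies implicitly.
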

\begin{proof} If $|a+b|\geq 2\lambda$ then either $|a|\geq\lambda$ or $|b|\geq \lambda$. This fact combined with the Hoeffding's inequality allows us to bound:
\begin{align*}
\PP\aoe \bigg{|} \sum_{p\in\mathcal{P}} \frac{X_p}{p^\sigma}-\sum_{p\leq y} \frac{X_p}{\sqrt{p}} \bigg{|} \geq 2\lambda\aod \leq & \PP \aoe\bigg{|}
\sum_{p\leq y}X_p \aoe\frac{1}{p^\sigma}-\frac{1}{\sqrt{p}}\aod \bigg{|} \geq \lambda \aod\\+& \PP \aoe\bigg{|}
\sum_{p>y} \frac{X_p}{p^\sigma}\bigg{|} \geq \lambda \aod \\
&\leq \exp\aoe-\frac{\lambda^2}{2V_y}\aod+\exp\aoe-\frac{\lambda^2}{2W_y}\aod,
\end{align*}
where $V_y=\sum_{p\leq y}\aoe \frac{1}{p^\sigma}-\frac{1}{\sqrt{p}} \aod^2$ and $U_y=\sum_{p>y}\frac{1}{p^{2\sigma}}$. To complete the proof we only need to estimate these quantities. By the mean value theorem
\begin{equation*}
\frac{1}{p^\sigma}-\frac{1}{\sqrt{p}}=(\sigma-1/2)\frac{\log p}{p^\theta}, \mbox{ for some }\theta=\theta(p,\sigma)\in [1/2,\sigma].
\end{equation*}
 Therefore
\begin{align*}
V_y&\leq (\sigma-1/2)^2\sum_{p\leq y} \frac{\log^2 p}{p}=(\sigma-1/2)^2\int_{1^-}^y \frac{\log^2 t}{t} d\pi(t)\\
&=(\sigma-1/2)^2\aoe \frac{\pi(y)\log^2 y}{y}-\int_{1^-}^y\pi(t)\frac{2\log t-\log^2t}{t^2}dt\aod\\
&\ll (\sigma-1/2)^2\aoe \log y+\int_{1^{-}}^y\frac{\log t}{t}dt\aod \ll (\sigma-1/2)^2\log^2 y.\\
U_y &=\int_{y}^{\infty}\frac{d\pi(t)}{t^{2\sigma}}=-\frac{\pi(y)}{y^{2\sigma}}-\int_y^{\infty}\frac{-2\sigma\pi(t)}{t^{2\sigma+1}}dt\\
&\ll \frac{1}{y^{2\sigma-1}\log y}+2\sigma\int_{y}^\infty \frac{1}{t^{2\sigma}\log t}dt\ll\frac{1}{y^{2\sigma-1}\log y}+\frac{2\sigma}{(2\sigma-1)y^{2\sigma-1}\log y}\\
&\ll\frac{1}{(2\sigma-1)y^{2\sigma-1}\log y}.
\end{align*}
In particular, the choice $y=\exp ((2\sigma-1)^{-1})$ implies that both variances $V_y$ and $U_y$ are $O(1)$. \end{proof}
The simple random walk $S_n=\sum_{k=1}^n X_n$ where $(X_n)_{n\in\NN}$ is i.i.d with $X_1=\pm 1$ with probability $1/2$ each, satisfies \textit{a.s.}
$\limsup_{n\to\infty} S_n=\infty$ and $\liminf_{n\to\infty} S_n=-\infty$. We follow the same line of reasoning as in the proof of this result
(\cite{shiryaev} pg. 381, Theorem 2) to prove:

\begin{lemma}\label{lemma lim sup acima da variancia} Assume that $\sum_{p\in\mathcal{P}}\frac{1}{p}=\infty$.
Let $y_k$ be a increasing sequence of positive real numbers such that $\lim y_k=\infty$. Then it \textit{a.s.} holds that:
\begin{align*}
&\limsup_{k\to\infty} \frac{\sum_{p\leq y_k} \frac{X_p}{\sqrt{p}}}{\sqrt{\sum_{p\leq y_k} \frac{1}{p}}}=\infty,\\
&\liminf_{k\to\infty} \frac{\sum_{p\leq y_k} \frac{X_p}{\sqrt{p}}}{\sqrt{\sum_{p\leq y_k} \frac{1}{p}}}=-\infty.
\end{align*}
\end{lemma}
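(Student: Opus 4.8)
The plan is to follow the classical route for $\limsup S_n=\infty$ of a simple random walk (\cite{shiryaev}): combine a central limit theorem with Kolmogorov's $0$--$1$ law. Enumerate $\mathcal{P}=\{p_1<p_2<\dots\}$ and write $T_N=\sum_{n=1}^{N}\frac{X_{p_n}}{\sqrt{p_n}}$ and $s_N^2=\sum_{n=1}^{N}\frac{1}{p_n}$. Since $\sum_{p\in\mathcal{P}}\frac{1}{p}=\infty$, the set $\mathcal{P}$ is infinite and $s_N\to\infty$. Putting $N_k=\pi(y_k)$ we have $N_k\to\infty$, $\sum_{p\le y_k}\frac{X_p}{\sqrt{p}}=T_{N_k}$ and $\sqrt{\sum_{p\le y_k}\frac{1}{p}}=s_{N_k}$, so the assertion of the lemma is exactly that $\limsup_{k\to\infty}T_{N_k}/s_{N_k}=+\infty$ and $\liminf_{k\to\infty}T_{N_k}/s_{N_k}=-\infty$ almost surely.

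First I would record the central limit theorem $T_N/s_N\to_d\mathcal{N}(0,1)$. This follows from the Lindeberg--Feller theorem applied to the triangular array $\xi_{n,N}:=\frac{X_{p_n}}{\sqrt{p_n}\,s_N}$, $1\le n\le N$: one has $\sum_{n=1}^{N}\var(\xi_{n,N})=1$, and, since $P1$ gives $p_n\ge 1$, $\max_{1\le n\le N}|\xi_{n,N}|\le 1/s_N\to 0$; hence for every $\epsilon>0$ the Lindeberg sum $\sum_{n=1}^{N}\EE\big[\xi_{n,N}^2\,\ind_{\{|\xi_{n,N}|>\epsilon\}}\big]$ equals $0$ for all large $N$. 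In particular, for every fixed $L>0$,
\[
\lim_{k\to\infty}\PP\!\left(\frac{T_{N_k}}{s_{N_k}}\ge L\right)=\PP\big(\mathcal{N}(0,1)\ge L\big)=:c_L>0.
\]

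Next I would fix $L>0$ and show that $E_L:=\{\limsup_{k\to\infty}T_{N_k}/s_{N_k}\ge L\}$ is a tail event of $(X_{p_n})_{n\ge1}$. For any $m\ge 1$ and any $k$ with $N_k>m$, $\frac{T_{N_k}}{s_{N_k}}=\frac{T_{N_k}-T_m}{s_{N_k}}+\frac{T_m}{s_{N_k}}$, and the last term tends to $0$ because $s_{N_k}\to\infty$; therefore $\limsup_k T_{N_k}/s_{N_k}=\limsup_k (T_{N_k}-T_m)/s_{N_k}$, which is measurable with respect to $\sigma(X_{p_{m+1}},X_{p_{m+2}},\dots)$. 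By Kolmogorov's $0$--$1$ law $\PP(E_L)\in\{0,1\}$. On the other hand, by the reverse Fatou lemma and the displayed limit, $\PP(E_L)\ge\PP\big(\{T_{N_k}/s_{N_k}\ge L\}\ \text{i.o.}\big)\ge\limsup_{k}\PP\big(T_{N_k}/s_{N_k}\ge L\big)=c_L>0$, so $\PP(E_L)=1$. Since $\{\limsup_k T_{N_k}/s_{N_k}=+\infty\}=\bigcap_{L\in\NN}E_L$ is a countable intersection of probability-$1$ events, it has probability $1$. Replacing $(X_p)$ by $(-X_p)$, which has the same distribution, yields $\liminf_k T_{N_k}/s_{N_k}=-\infty$ almost surely, which completes the proof.

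The two places needing care are: (i) the uniform asymptotic negligibility of the array, i.e.\ $s_N\to\infty$, which is the only point where $\sum_{p}\frac{1}{p}=\infty$ enters and which makes Lindeberg's condition hold trivially; and (ii) the tail-event identification, where one must justify that subtracting the fixed sum $T_m$ leaves $\limsup_k T_{N_k}/s_{N_k}$ unchanged — again a consequence of $s_{N_k}\to\infty$. Everything else is routine bookkeeping.
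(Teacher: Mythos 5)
Your proof is correct and follows essentially the same route as the paper: a Lindeberg-type central limit theorem for the normalized partial sums (you spell out the triangular-array details the paper only sketches), identification of $\{\limsup \ge L\}$ as a tail event, Kolmogorov's $0$--$1$ law upgraded from positive probability to probability one, and symmetry for the $\liminf$. The only cosmetic difference is that the paper gets the $\liminf$ from the symmetry of each individual sum rather than replacing $(X_p)$ by $(-X_p)$, which is immaterial.
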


\begin{proof}
We begin by observing that $(X_p/\sqrt{p})_{p\in\mathcal{P}}$ is a sequence of independent and symmetric random variables that are uniformly bounded by $1$.
It follows that
\begin{equation*}
\lim_{y\to\infty} \var \sum_{p\leq y} \frac{X_p}{\sqrt{p}}=\lim_{y\to\infty}\sum_{p\leq y}\frac{1}{p}=\infty,
\end{equation*}
and hence this sequence satisfies the Lindenberg condition. By the Central Limit Theorem it follows that for each fixed $L>0$ there exists a $\delta>0$
such that for sufficiently large $y>0$
\begin{equation*}
\PP\aoe  \sum_{p\leq y} \frac{X_p}{\sqrt{p}} \geq L \sqrt{\sum_{p\leq y} \frac{1}{p}}  \aod=\PP\aoe  \sum_{p\leq y} \frac{X_p}{\sqrt{p}} \leq - L \sqrt{\sum_{p\leq y} \frac{1}{p}}  \aod\geq \delta.
\end{equation*}
Next observe that the event in which $\limsup_{k\to\infty} \frac{\sum_{p\leq y_k} \frac{X_p}{\sqrt{p}}}{\sqrt{\sum_{p\leq y_k} \frac{1}{p}}}\geq L$ is a tail event,
and hence by the Kolmogorov zero or one law it has either probability zero or one. Since
\begin{align*}
&\PP \aoe \sum_{p\leq y_k} \frac{X_p}{\sqrt{p}} \geq L \sqrt{\sum_{p\leq y_k} \frac{1}{p}}\mbox{ for infinitely many }k\aod\\
&=\lim_{n\to\infty}\PP \aoe \bigcup_{k=n}^\infty\bigg{[} \sum_{p\leq y_k} \frac{X_p}{\sqrt{p}} \geq L \sqrt{\sum_{p\leq y_k} \frac{1}{p}}\bigg{]}\aod\geq \delta,
\end{align*}
it follows that for each fixed $L>0$ $\limsup_{k\to\infty} \frac{\sum_{p\leq y_k} \frac{X_p}{\sqrt{p}}}{\sqrt{\sum_{p\leq y_k} \frac{1}{p}}}\geq L$, \textit{a.s.}
Similarly, we can conclude that for each fixed $L>0$ $\liminf_{k\to\infty} \frac{\sum_{p\leq y_k} \frac{X_p}{\sqrt{p}}}{\sqrt{\sum_{p\leq y_k} \frac{1}{p}}}\leq -L$, \textit{a.s.} \end{proof}

\begin{proof}[Proof of item ii] Take $\lambda=\lambda(y) = \sqrt{\sum_{p\leq y}\frac{1}{p} }$ in Lemma \ref{lemma aproximando a serie em sigma pela serie em meio } and let $y=\exp((2\sigma-1)^{-1})$. Since $\lim_{y\to\infty}\lambda(y)=\infty$, it follows that there is a subsequence $y_k\to\infty$ for which $\sum_{k=1}^\infty \exp(-d\lambda^2(y_k))<\infty$ and hence, by the Borel-Cantelli Lemma, it \textit{a.s.} holds that
\begin{equation*}
\limsup_{k\to\infty} \frac{\bigg{|} \sum_{p\in\mathcal{P}} \frac{X_p}{p^{\sigma_k}}-\sum_{p\leq y_k} \frac{X_p}{\sqrt{p}}\bigg{|}}{\sqrt{\sum_{p\leq y_k}\frac{1}{p} }} \leq 2,
\end{equation*}
where $y_k=\exp((2\sigma_k-1)^{-1})$. This combined with Lemma \ref{lemma lim sup acima da variancia} gives \textit{a.s.} 
\begin{align*}
\limsup_{\sigma\to1/2^+} \frac{\sum_{p\in\mathcal{P}} \frac{X_p}{p^\sigma}}{\sum_{p\leq y}\frac{1}{p}}&\geq \limsup_{k\to\infty} \frac{ \sum_{p\leq y_k} \frac{X_p}{\sqrt{p}} -\bigg{|}
\sum_{p\in\mathcal{P}} \frac{X_p}{p^{\sigma_k}}-\sum_{p\leq y_k} \frac{X_p}{\sqrt{p}} \bigg{|} }{\sqrt{\sum_{p\leq y_k}\frac{1}{p}}}\\
&\geq \limsup_{k\to\infty}\bigg{(} \frac{ \sum_{p\leq y_k} \frac{X_p}{\sqrt{p}}}{\sqrt{\sum_{p\leq y_k}\frac{1}{p}}} -3\bigg{)}\\
&=\infty.
\end{align*}
Similarly, we conclude that $\liminf_{\sigma\to1/2^+} \sum_{p\in\mathcal{P}} \frac{X_p}{p^\sigma}=-\infty$, \textit{a.s.} Since $F(\sigma)$ is \textit{a.s.} analytic,
it follows that there is an infinite number of $\sigma>1/2$ for which $F(\sigma)=0$. \end{proof}

\subsection{Proof of Theorem \ref{Teorema 1} (ii), the general case}
The following Lemma is an adaptation of \cite{boviergeometricseries}, Theorem 1.2:
\begin{lemma}\label{lemma central do limite} Assume that $\mathcal{P}$ satisfies P1-P2 and that $\sum_{p\in\mathcal{P}}\frac{1}{p}=\infty$. Then
\begin{equation}\label{equacao central do limite 2}
\frac{\sum_{p\in\mathcal{P}}\frac{X_p}{p^\sigma}}{\sqrt{\sum_{p\in\mathcal{P}}\frac{1}{p^{2\sigma}}}}\to_d \mathcal{N}(0,1), \mbox{ as } \sigma\to\frac{1}{2}^+.
\end{equation}
\end{lemma}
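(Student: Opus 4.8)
The plan is to compute the characteristic function of the normalized sum and show it converges pointwise to $e^{-t^2/2}$, after which Lévy's continuity theorem closes the argument. Write $B_\sigma^2:=\sum_{p\in\mathcal{P}}p^{-2\sigma}$. For every $\sigma>1/2$ we have $2\sigma>1$, so by $P2$ the series $\sum_p p^{-2\sigma}$ converges; hence by the Kolmogorov one-series theorem $F(\sigma)=\sum_p X_p p^{-\sigma}$ converges \textit{a.s.}, and $G_\sigma:=F(\sigma)/B_\sigma$ is a well-defined random variable. The hypothesis $\sum_p 1/p=\infty$ enters only through monotone convergence: $B_\sigma^2\to\sum_p 1/p=\infty$ as $\sigma\to 1/2^+$.

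First I would record the characteristic function of $G_\sigma$. For each finite $T$, independence together with $\EE e^{i\theta X_p}=\cos\theta$ gives $\EE\exp\!\big(it\sum_{p\le T}X_p/(p^\sigma B_\sigma)\big)=\prod_{p\le T}\cos\!\big(t/(p^\sigma B_\sigma)\big)$; letting $T\to\infty$, the \textit{a.s.} convergence of the partial sums to $F(\sigma)$ and bounded convergence yield
\[
\varphi_\sigma(t):=\EE e^{itG_\sigma}=\prod_{p\in\mathcal{P}}\cos\!\left(\frac{t}{p^\sigma B_\sigma}\right).
\]
Now fix $t\in\RR$. By $P1$ every $p\in\mathcal{P}$ satisfies $p\geq 1$, hence $\big|t/(p^\sigma B_\sigma)\big|\leq |t|/B_\sigma$ for all $p$; since $B_\sigma\to\infty$, there is $\sigma_0>1/2$ such that for $1/2<\sigma<\sigma_0$ every cosine argument lies in $(-1,1)$, where $\log\cos x=-x^2/2+O(x^4)$ uniformly. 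Summing over $p$ and using $\sum_p p^{-4\sigma}\leq(\sup_p p^{-2\sigma})\sum_p p^{-2\sigma}\leq B_\sigma^2$ (again $p\geq1$), we get
\[
\log\varphi_\sigma(t)=-\frac{t^2}{2B_\sigma^2}\sum_{p\in\mathcal{P}}\frac{1}{p^{2\sigma}}+O\!\left(\frac{t^4}{B_\sigma^4}\sum_{p\in\mathcal{P}}\frac{1}{p^{4\sigma}}\right)=-\frac{t^2}{2}+O\!\left(\frac{t^4}{B_\sigma^2}\right).
\]
As $\sigma\to 1/2^+$ the error term tends to $0$, so $\varphi_\sigma(t)\to e^{-t^2/2}$ for every $t\in\RR$, and Lévy's continuity theorem gives $G_\sigma\to_d\mathcal{N}(0,1)$, which is the assertion.

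An essentially equivalent route would be to invoke the Lindeberg–Feller central limit theorem for triangular arrays of series: for fixed $\sigma$ the array $(X_p/(p^\sigma B_\sigma))_{p\in\mathcal{P}}$ has variances summing to $1$, and — because $p\geq 1$ and $B_\sigma\to\infty$ — each summand is bounded by $1/B_\sigma$, which is smaller than any prescribed $\epsilon>0$ once $\sigma$ is close enough to $1/2$, so the Lindeberg condition holds trivially. I expect the only steps requiring genuine care to be (a) the passage from finite truncations to the full product in the formula for $\varphi_\sigma$, and (b) the observation that the cosine arguments are \emph{simultaneously} small over all $p\in\mathcal{P}$, so that the second-order expansion of $\log\cos$ may be applied termwise and then summed; both reduce to the two facts $p\geq 1$ and $B_\sigma\to\infty$. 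Everything else is a routine second-order Taylor estimate.
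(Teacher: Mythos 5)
Your proof is correct and follows essentially the same route as the paper: both compute the characteristic function as the infinite product $\prod_p\cos\big(t/(p^\sigma V(\sigma))\big)$, justify the passage from finite truncations by a.s.\ convergence and bounded convergence, expand $\log\cos$ to second order using that all arguments are uniformly small once $V(\sigma)\to\infty$ (which is where $\sum_p 1/p=\infty$ enters), and conclude via L\'evy continuity. The only cosmetic difference is in bounding the fourth-order error term ($O(t^4/V(\sigma)^2)$ via $\sum_p p^{-4\sigma}\le V(\sigma)^2$ rather than the paper's $O(t^4/V(\sigma)^4)$ via $\sum_p p^{-2}<\infty$), which changes nothing.
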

\begin{proof} Let $V(\sigma)=\sqrt{\sum_{p\in\mathcal{P}}\frac{1}{p^{2\sigma}}}$. Observe that
$V(\sigma)\to\infty$ as $\sigma\to 1/2^+$: For each fixed $y>0$
\begin{equation*}
\liminf_{\sigma\to 1/2^+}\sum_{p\in\mathcal{P}}\frac{1}{p^{2\sigma}}\geq \lim_{\sigma\to 1/2^+}\sum_{p\leq y}\frac{1}{p^{2\sigma}}=\sum_{p\leq y}\frac{1}{p}.
\end{equation*}
Thus, by making $y\to\infty$ in the equation above, we obtain the desired claim.

For each fixed $\sigma>1/2$, by the Kolmogorov one series Theorem, we have that $\sum_{p\leq y}\frac{X_p}{p^\sigma}$ converges almost surely as $y\to\infty$. Since $(X_p)_{p\in\mathcal{P}}$ are independent, by the dominated convergence theorem:
\begin{align*}
\varphi_\sigma(t)&:=\EE \exp \bigg{(}\frac{it}{V(\sigma)}\sum_{p\in\mathcal{P}}\frac{X_p}{p^\sigma}\bigg{)}=\lim_{y\to\infty}\EE \exp \bigg{(}\frac{it}{V(\sigma)}\sum_{p\leq y}\frac{X_p}{p^\sigma} \bigg{)}\\
&=\prod_{p\in\mathcal{P}}\cos\bigg{(}\frac{t}{V(\sigma)p^\sigma}\bigg{)}.
\end{align*}
We will show that for each fixed $t\in\RR$, $\varphi_\sigma(t)\to \exp(-t^2/2)$ as $\sigma\to 1/2^+$. Observe that $\varphi_\sigma(t)=\varphi_\sigma(-t)$, so we may assume $t\geq 0$. Thus, for each fixed $t\geq 0$
we may choose $\sigma>1/2$ such that $0\leq\frac{t}{V(\sigma)p^\sigma}\leq \frac{1}{100}$ and $0\leq 1-\cos\big{(}\frac{t}{V(\sigma)p^\sigma}\big{)}\leq \frac{1}{100}$, for all $p\in\mathcal{P}$.

For $|x|\leq 1/100$, we have that $\log(1-x)=-x+O(x^2)$ and $\cos(x)=1-\frac{x^2}{2}+O(x^4)$. Further,
$1-\cos(x)=2\sin^2(x/2)\leq \frac{x^2}{2}$. Thus, we have:
\begin{align*}
\log \varphi_\sigma(t)&=\sum_{p\in\mathcal{P}}\log \cos\bigg{(}\frac{t}{V(\sigma)p^\sigma}\bigg{)}\\
&=\sum_{p\in\mathcal{P}}\log\bigg{(}1-\bigg{(}1-\cos\bigg{(}\frac{t}{V(\sigma)p^\sigma}\bigg{)}\bigg{)}\bigg{)}\\
&=-\sum_{p\in\mathcal{P}}\bigg{(}1-\cos\bigg{(}\frac{t}{V(\sigma)p^\sigma}\bigg{)}\bigg{)}+\sum_{p\in\mathcal{P}}O\bigg{(}1-\cos\bigg{(}\frac{t}{V(\sigma)p^\sigma}\bigg{)}\bigg{)}^2 \\
&=-\sum_{p\in\mathcal{P}}\bigg{(}\frac{t^2}{2V^2(\sigma)p^{2\sigma}}+O\bigg{(}\frac{t^4}{V^4(\sigma)p^{4\sigma}} \bigg{)}  \bigg{)}+\sum_{p\in\mathcal{P}}O\bigg{(}\frac{t^4}{V^4(\sigma)p^{4\sigma}} \bigg{)}\\
&=-\frac{t^2}{2V^2(\sigma)}\sum_{p\in\mathcal{P}}\frac{1}{p^{2\sigma}}+\sum_{p\in\mathcal{P}}O\bigg{(}\frac{t^4}{V^4(\sigma)p^{2}} \bigg{)}\\
&=-\frac{t^2}{2}+O\bigg{(}\frac{t^4}{V^4(\sigma)}\bigg{)}.
\end{align*}
We conclude that $\varphi_\sigma(t)\to \exp(-t^2/2)$ as $\sigma\to 1/2^+$.  \end{proof}

\begin{proof}[Proof of item ii] Let $V(\sigma)$ be as in the proof of Lemma \ref{lemma central do limite}.
Since $V(\sigma)\to\infty$ as $\sigma\to 1/2^+$, we have, for each fixed $y>0$
\begin{equation*}
\limsup_{\sigma\to1/2^+}\frac{1}{V(\sigma)}\sum_{p\in\mathcal{P}}\frac{X_p}{p^\sigma}=\limsup_{\sigma\to1/2^+}\frac{1}{V(\sigma)}\sum_{p>y}\frac{X_p}{p^\sigma}.
\end{equation*}
Thus, for each fixed $L>0$,
\begin{equation*}
\limsup_{\sigma\to1/2^+}\frac{1}{V(\sigma)}\sum_{p\in\mathcal{P}}\frac{X_p}{p^\sigma}\geq L
\end{equation*}
is a tail event. By Lemma \ref{lemma central do limite}, $\frac{1}{V(\sigma)}\sum_{p\in\mathcal{P}}\frac{X_p}{p^\sigma}\to_d\mathcal{N}(0,1)$, as $\sigma\to1/2^+$. Thus, this tail event has positive probability (see the proof of Lemma \ref{lemma lim sup acima da variancia}). By the Kolmogorov zero or one Law, \textit{a.s.}:
\begin{equation*}
\limsup_{\sigma\to1/2^+}\frac{1}{V(\sigma)}\sum_{p\in\mathcal{P}}\frac{X_p}{p^\sigma}= \infty.
\end{equation*}
Similarly, \textit{a.s.}:
\begin{equation*}
\liminf_{\sigma\to1/2^+}\frac{1}{V(\sigma)}\sum_{p\in\mathcal{P}}\frac{X_p}{p^\sigma}=-\infty.
\end{equation*}
Since $F(\sigma)=\sum_{p\in\mathcal{P}}\frac{X_p}{p^\sigma}$ is \textit{a.s.} an analytic function, with probability $1$ we have that $F(\sigma)=0$ for an infinite number of $\sigma\to1/2^+$. \end{proof}

\noindent \textbf{Acknowledgments.} The proof of Theorem \ref{Teorema 1} item ii was initially presented
only in the case $\pi(x)\ll \frac{x}{\log x}$. I would like to thank the anonymous referee for pointing that this should not be a necessary condition for the existence of an infinite number of real zeros, and for pointing the reference \cite{boviergeometricseries}, from which I could adapt their Theorem 1.2 for random Dirichlet series (Lemma \ref{lemma central do limite}).

{\small{\sc Departamento de Matem\'atica, Universidade Federal de Minas Gerais, Av. Ant\^onio Carlos, 6627, CEP 31270-901, Belo Horizonte, MG, Brazil.} \\
\textit{Email address:} marco@mat.ufmg.br}


\begin{thebibliography}{1}

\bibitem{boviergeometricseries}
{\sc A.~Bovier and P.~Picco}, {\em A law of the iterated logarithm for random
  geometric series}, Ann. Probab., 21 (1993), pp.~168--184.

\bibitem{kahane}
{\sc J.-P. Kahane}, {\em Some random series of functions}, vol.~5 of Cambridge
  Studies in Advanced Mathematics, Cambridge University Press, Cambridge,
  second~ed., 1985.

\bibitem{krishnapurzeros}
{\sc M.~R. Krishnapur}, {\em Zeros of random analytic functions}, ProQuest LLC,
  Ann Arbor, MI, 2006.
\newblock Thesis (Ph.D.)--University of California, Berkeley.

\bibitem{montgomerylivro}
{\sc H.~L. Montgomery and R.~C. Vaughan}, {\em Multiplicative number theory.
  {I}. {C}lassical theory}, vol.~97 of Cambridge Studies in Advanced
  Mathematics, Cambridge University Press, Cambridge, 2007.

\bibitem{Vurandompolynomials}
{\sc H.~Nguyen, O.~Nguyen, and V.~Vu}, {\em On the number of real roots of
  random polynomials}, Commun. Contemp. Math., 18 (2016), pp.~1550052, 17.

\bibitem{shiryaev}
{\sc A.~N. Shiryaev}, {\em Probability}, vol.~95 of Graduate Texts in
  Mathematics, Springer-Verlag, New York, second~ed., 1996.
\newblock Translated from the first (1980) Russian edition by R. P. Boas.

\end{thebibliography}
\end{document}